\numberwithin{equation}{section}
\newtheorem{theorem}{Theorem}[section]
\newtheorem{lemma}[theorem]{Lemma}
\newtheorem{proposition}[theorem]{Proposition}
\newtheorem{prop}[theorem]{Proposition}
\def\eps{\varepsilon }
\newcommand{\RR}{\mathbb{R}}
\newcommand{\CC}{\mathbb{C}}
\newcommand{\NN}{{\mathbb N}}
\newcommand{\ZZ}{{\mathbb Z}}
\newcommand{\TT}{{\mathbb T}}
\def\beq{\begin{equation}}
\def\eeq{\end{equation}}
\def\bb1{{1\!\!1}}
\def\cB{\mathcal{B}}
\def\cA{\mathcal{A}}
\def\OS{\mathrm{OS}}
\def\rit{{\Bbb R}}
\def\eps{\varepsilon}
\def\bl{\mathrm{bl}}
\begin{document}

\title{Sharp bounds for the resolvent of linearized Navier Stokes equations in the half space around a  shear profile} 

\author{Emmanuel Grenier\footnotemark[1]
  \and Toan T. Nguyen\footnotemark[2]
}

\maketitle

\renewcommand{\thefootnote}{\fnsymbol{footnote}}

\footnotetext[1]{Equipe Projet Inria NUMED,
 INRIA Rh\^one Alpes, Unit\'e de Math\'ematiques Pures et Appliqu\'ees., 
 UMR 5669, CNRS et \'Ecole Normale Sup\'erieure de Lyon,
               46, all\'ee d'Italie, 69364 Lyon Cedex 07, France. Email: Emmanuel.Grenier@ens-lyon.fr}

\footnotetext[2]{Department of Mathematics, Penn State University, State College, PA 16803. 
Email: nguyen@math.psu.edu. TN's research was partly supported by the NSF under grant DMS-1405728.}




\begin{abstract}

In this paper, we derive sharp bounds on the semigroup of the linearized incompressible Navier-Stokes equations 
near a stationary shear layer in the half plane and in the half space ($\rit_+^2$ or $\rit_+^3$), with Dirichlet boundary
conditions, assuming that this shear layer in spectrally unstable for Euler equations.  
In the inviscid limit, due to the prescribed no-slip boundary conditions, 
vorticity becomes unbounded near the boundary. The novelty of this paper is to introduce boundary layer norms 
that capture the unbounded vorticity and to derive sharp estimates on this vorticity that are uniform in the inviscid limit. 

\end{abstract}



\section{Introduction}



\subsection{Position of the problem}


In this paper, we study the linearized incompressible Navier Stokes equations on the half space $(x,z) \in \TT \times \RR_+$
around a stationary boundary layer profile of the form $U_\bl = (U(z),0)$, where
$U$ is a smooth function with $U(0) = 0$. In the whole paper, $x$ is periodic, with
unit period and $z > 0$. As will be apparent in the proof, the three dimensional case $\TT^2 \times \RR_+$ is similar.
Precisely, we consider the linear problem 
\beq \label{linearizedNS}
\begin{aligned}
\partial_t v + U_\bl \cdot \nabla v + v \cdot \nabla U_\bl + \nabla p &=  \nu \Delta v
\\
\nabla \cdot v & =0
\end{aligned}\eeq
in the half space $\TT \times \RR_+$, where $p$ is the scalar pressure 
and $v = (v_1,v_2)$ denotes the velocity vector field, satisfying the classical no-slip boundary condition
\begin{equation}\label{BC}
v_{\vert_{z=0}} =0.
\end{equation}
We will denote by $\omega$ the vorticity
$$
\omega= \partial_z v_1 - \partial_x v_2
$$
We are interested in the linear problem \eqref{linearizedNS} in the vanishing viscosity limit $\nu \to0$. 
The linearized problem around  a stationary profile $U_\bl$ is a classical problem in Fluid Mechanics
and arises in the study of boundary layer instabilities 
and of the onset of turbulence. It has attracted prominent physicists, including Rayleigh, 
Orr, Sommerfeld, Heisenberg, Tollmien, C.C. Lin, Schlichting, among others. 
For a review of the physical literature on the subject, we refer the readers to \cite{Reid}.

Two cases arise. Either the profile $U_{bl}$ is spectrally unstable for the underlying Euler equation.
In this case it is also spectrally unstable for Navier Stokes equations, with a most unstable eigenvalue with
a $O(1)$ real part. Or the profile is spectrally stable for the underlying Euler equation. In this case
it turns out that it is spectrally {\it} unstable for Navier Stokes equations, with a much slower instability,
namely with a real part of order $O(\nu^{1/2})$. In this paper we focus on the first case and prove
that the linear growth rate of a solution of linearized Navier Stokes equation is arbitrarily close
to the spectral radius of linearized Navier Stokes equations, which is known to be arbitrarily close
to that of Euler equations.

Motivated by the study of instabilities of Prandtl's boundary layers \cite{Grenier00CPAM,GGN1,GGN3},  
we are interested in deriving sharp bounds on the semigroup of \eqref{linearizedNS} that are uniform in the vanishing viscosity limit. 
As $\nu\to 0$, solutions to the linear problem \eqref{linearizedNS} are expected to converge to solutions 
to the corresponding linearized Euler problem around $U_\bl$. 
In the limit, however, due to the discrepancy of the corresponding boundary conditions between Euler and Navier-Stokes equations, 
boundary layers of thickness of order $\sqrt \nu$ appear, and thus, the vorticity near the boundary is of order $\nu^{-1/2}$
and is unbounded as $\nu$ vanishes.

In this paper, we introduce boundary layer function spaces that capture the behavior of the vorticity near the boundary. 
Our semigroup estimates are uniform in the inviscid limit. 


\subsection{Boundary layer spaces}


Let us first introduce various functional spaces.
Let $\beta,\gamma>0$ be fixed and let 
\begin{equation}\label{sub-thickness}
\delta = \gamma \sqrt \nu
\end{equation}
be the boundary layer thickness. The constant $\gamma$ will be fixed later, large enough.
For any function of one variable $f = f(z)$, we introduce the boundary layer norm 
\begin{equation}\label{def-norm}
\| f \|_{\beta,\gamma}  = \sup_{z \ge 0} | f(z) | e^{\beta  z} 
\Bigl( 1 + \delta^{-1} \phi_{P} (\delta^{-1} z)  \Bigr)^{-1}
\end{equation}
with boundary layer weight function 
$$ 
\phi_P(z) = \frac{1}{1+z^P} 
$$
for some fixed constant $P>1$.
By definition, $f$ decreases exponentially fast at infinity, like $e^{- \beta z}$
and is bounded by $\delta^{-1} / ( 1 + (z / \delta)^P)$ for small $z$.
 It is also possible to consider exponential weights. Note that $\beta$ may be arbitrarily small.

 We denote by ${\cal B}^{\beta,\gamma}$ the space that consists of functions  
 with finite $\| . \|_{\beta,\gamma}$ boundary layer norm.  
We expect the vorticity of the Navier-Stokes equations \eqref{linearizedNS} 
to be in the boundary layer space ${\cal B}^{\beta,\gamma}$, for each $x$ and $t$. 

Finally, we denote by $\cA^{\beta}$, the function space without a boundary layer behavior,
with the weighted norm 
$$
\| f \|_{\beta}  = \sup_{z\ge 0} | f(z) | e^{\beta z} .
$$


\subsection{Linearized Navier-Stokes}\label{sec-NSlin}


We shall work with the vorticity formulation of \eqref{linearizedNS}. 
Thanks to the divergence-free condition, we may introduce the stream function $\phi(t,x,z)$ and define 
$$
v = \nabla^\perp \phi = (\partial_z \phi, -\partial_x \phi).
$$ 
By definition, there holds 
\begin{equation}\label{ellipticstream}
\Delta \phi = \omega. \end{equation}
Taking the curl of \eqref{linearizedNS} yields 
\begin{equation}\label{eqs-vorticity} 
\partial_t \omega - L \omega = 0, \qquad L \omega : = - U \partial_x \omega + U'' \partial_x \phi +  \nu \Delta \omega,
\end{equation}
together with the boundary conditions 
\begin{equation}\label{BC-phi} \partial_x\phi_{\vert_{z=0}} = \partial_z \phi_{\vert_{z=0}} = 0. \end{equation} 
The problem \eqref{ellipticstream}-\eqref{BC-phi} is equivalent to the linearized Navier-Stokes problem \eqref{linearizedNS}-\eqref{BC} 
around $U(z)$. Similarly, the linearized Euler equations around $U(z)$ are \eqref{ellipticstream}-\eqref{eqs-vorticity}, with $\nu =0$ 
and boundary condition $\partial_x\phi_{\vert_{z=0}} =0$.

It is then convenient to introduce the Fourier transform in the $x$ variable.
Solutions to the linearized problem will be constructed in terms of Fourier series
\beq \label{Fourier-w}
\omega(t,x,z) = \sum_{\alpha\in \ZZ} e^{i\alpha x} \hat \omega_\alpha(t,z)
\eeq
where the Fourier coefficients $\hat \omega_\alpha(t,z)$ solve 
\begin{equation}\label{eqs-vorticitya} 
\partial_t \hat \omega_\alpha - L_\alpha \hat \omega_\alpha = 0, \qquad L_\alpha \hat \omega_\alpha 
: = - i\alpha U\hat \omega_\alpha + i\alpha \hat \phi_\alpha U'' +  \nu \Delta_\alpha \hat \omega_\alpha
\end{equation}
with 
$$
\Delta_\alpha \hat \phi_\alpha = \hat \omega_\alpha,
$$
together with the boundary conditions
$$
\alpha \hat \phi_\alpha = \partial_z\hat  \phi_\alpha =0
$$
at $z=0$. Here, 
$$
\Delta_\alpha = \partial_z^2 - \alpha^2.
$$ 
Observe that at $\alpha=0$, the linear problem \eqref{eqs-vorticitya} becomes 
$$ 
\partial_t \hat \omega_0 -  \nu \partial_z^2 \hat \omega_0 = 0
$$
whose semigroup can be explicitly solved. In particular, $\hat \omega_0(t,z) =0$ for all positive times, if it is initially zero. 

The aim of this paper is to prove the following result

\begin{theorem} \label{theo-main} 
Let $U(z)$ be a $C^\infty$ smooth boundary layer profile such that 
$U(0)=0$ and 
\begin{equation}\label{def-Ubl}
  |\partial_z^k (U(z) - U_+)| \le C_k e^{-\eta_0 z} , \qquad \forall~ z\ge 0, \quad k\ge 0,
  \end{equation}
for some constants $C_k, U_+, \eta_0$.
 Let $\lambda_0$ be the maximal unstable eigenvalue of the linearized Euler equations around $U$, 
namely the eigenvalue which has the largest real part $\Re\lambda_0$. 
We assume that $U$ is spectrally unstable for Euler equations, namely that 
$$
\Re \lambda_0 > 0.
$$
 Let $\alpha$ be fixed and let $\tau > 0$. Then there is a constant $C_{\tau}$,  so that  for any 
 $\nu \le 1$,  
\begin{equation}\label{main-bound}
\| e^{L_\alpha t} \omega_\alpha(0,.) \|_{\beta,\gamma}
 \le C_\tau e^{(\Re \lambda_0 + \tau) t }  \|\omega_\alpha(0,.) \|_{\beta,\gamma} ,  
\end{equation}
for any initial vorticity $\omega_\alpha(0,.)$  and for all $t\ge 0$, provided $\beta$ and $\gamma$ are small enough.  
\end{theorem}

Note that estimate (\ref{main-bound}) is uniform in $\nu$ as $\nu$ goes to $0$.
Note also that the vorticity is unbounded near the boundary as $\nu$ goes to $0$. 
 Theorem \ref{theo-main} provides a semigroup estimate for the linearized Navier-Stokes problem near an unstable 
 boundary layer profile, which is uniform in the vanishing viscosity limit. 
 Such an estimate is sharp without the knowledge of the multiplicity of the maximal unstable eigenvalue $\lambda_0$. 
 The interest in deriving such a sharp bound on the linearized Navier-Stokes problem is pointed out in 
 \cite{Grenier00CPAM,GGN1,GGN3,GrN1}. Certainly, the estimate \eqref{main-bound} is very natural, but, 
up to the best of our knowledge, it has never been proven in the literature . 
The difficulty lies in the fact that the initial data $\omega$  
has a boundary layer type behavior and 
hence in order to propagate this boundary layer behavior, pointwise bounds on the Green 
function of linearized Navier-Stokes equations near a boundary layer are needed.


\subsection{The resolvent}


In order to study $e^{L_\alpha t}$, it is convenient to 
take the Laplace transform of \eqref{eqs-vorticitya}. This leads to  the resolvent equation 
\begin{equation}\label{resolvent} 
(\lambda-L_\alpha)  \omega_\alpha =  f_\alpha
\end{equation} 
with $f_\alpha =  \omega_\alpha(0,z)$. As $L_\alpha$ is a compact perturbation of the Laplacian $\Delta_\alpha$, 
standard energy estimates yield that the operator $(\lambda - L_\alpha)^{-1}$ 
is well-defined and bounded from $H^{-1}$ to $H^{-1}$ by $|\Re \lambda - \gamma_0|^{-1}$,
 for some possibly large constant $\gamma_0$ and for any $\Re \lambda > \gamma_0$. 
 Hence, the classical semigroup theory (see, for instance, \cite[Theorem 6.13]{Pazy} or \cite{Z2}) yields   
\beq \label{int11}
e^{L_\alpha t}  f_\alpha = {1 \over 2  \pi i} \int_{\Gamma_\alpha} e^{\lambda t}  (\lambda - L_\alpha)^{-1}  f_\alpha \, d \lambda
\eeq
where $\Gamma_\alpha$ is a contour lying on the right of the spectrum of $L_\alpha$. 
It is traditional to introduce 
\begin{equation}\label{def-c}
c =  i \alpha^{-1}Ê\lambda , \qquad \eps = {{\nu} \over i \alpha} .
\end{equation}
Writing $\omega_\alpha = \Delta_\alpha  \phi_\alpha$, the resolvent equation \eqref{resolvent} 
becomes the classical Orr-Sommerfeld equations for the stream function $\phi_\alpha$
\beq \label{OS}
i \nu \Delta_\alpha^2 \phi_\alpha 
+ (\alpha U - i \lambda) \Delta_\alpha \phi_\alpha - \alpha U'' \phi_\alpha = - i f_\alpha,
\eeq
together with the boundary conditions: 
\begin{equation}\label{OS2}
 \phi_\alpha = \partial_z  \phi_\alpha =0, \qquad \mbox{on} \quad z=0.
\end{equation}
We then solve the Orr-Sommerfeld equations thanks to their Green functions. 
For each fixed $\alpha \in \NN$ and $c\in \CC$, we let $G_{\alpha,\lambda}(x,z)$ 
be the corresponding Green kernel of the Orr-Sommerfeld problem \eqref{OS}-\eqref{OS2}. 
By definition, for each $x\in \RR_+$, $G_{\alpha,c}(x,z)$ solves 
$$ 
\OS(G_{\alpha,\lambda} (x,\cdot)) = \delta_x (\cdot)
$$
on $z\ge 0$, together with the boundary conditions:
$$
G_{\alpha,\lambda}(x,0) = \partial_z G_{\alpha,\lambda} (x,0) =0, \qquad \lim_{z\to \infty} G_{\alpha,\lambda}(x,z) =0.
$$
The solution $\phi_\alpha$ to the Orr-Sommerfeld problem \eqref{OS}-\eqref{OS2} is then constructed by 
$$ 
\phi_\alpha(z) = -i  \int_0^\infty G_{\alpha, c} (x,z)  f_\alpha(x) \; dx .
$$
Similarly
\begin{equation}\label{int-La} 
 (\lambda - L_\alpha)^{-1}  f_\alpha (z) = - i   \int_0^\infty \Delta_\alpha G_{\alpha,\lambda} (x,z)  f_\alpha (x)\;dx ,
 \end{equation}
in which $\Gamma_\alpha$ is chosen depending on $\alpha$ and lying in the resolvent set of $L_\alpha$. 

Such a spectral formulation of the linearized Navier-Stokes equations near a boundary layer shear profile has been 
intensively studied  in the physical literature.  We in particular refer to
\cite{Reid, Sch, Lin0,LinBook} for the major works of Heisenberg, Tollmien, C.C. Lin, and Schlichting on the subject. 
We also refer to \cite{GGN1, GGN3, GGN2} for the rigorous spectral analysis on the Orr-Sommerfeld equations.

We now recall the main results from \cite{GrN1} on the Green function $G_{\alpha,\lambda}$. 
We focus on the case $\alpha > 0$, the case $\alpha < 0$ being similar.
We introduce Rayleigh's equation 
$$
Ray_\alpha(\phi) = (\alpha U- i \lambda )\Delta_\alpha\phi - \alpha U''\phi =0.
$$
The Rayleigh equation $Ray_\alpha(\phi) = 0$ has two solutions $\phi_{\alpha,\pm}$, with respective
behaviors $e^{\pm \alpha z}$ at infinity. 
We define the Evans function $E(\alpha,\lambda)$ by
$$
E(\alpha,\lambda) = \phi_{\alpha,-}(0) .
$$
In this paper, we restrict ourselves to the case when $\lambda$ is away from the range of $-i\alpha U$. 
Precisely, let $\epsilon_0$ be an arbitrarily small, but fixed, positive constant, 
we shall consider the range of $(\alpha,\lambda)$ in $\RR_+\times \CC$ so that 
\begin{equation}\label{noncritical}
d(\alpha,\lambda) =\inf_{z\in \RR_+}| \lambda + i \alpha U(z)| \ge \epsilon_0.
\end{equation}
Note that $d(\alpha,\lambda) = \Re \lambda$ if $\Im \lambda \in - \alpha \mathrm{Range}(U)$. In any case, we have 
\beq \label{range-c1}
d(\alpha,\lambda) \ge | \Re \lambda | .
\eeq
The main result from \cite{GrN1} is as follows.

\begin{theorem}\label{theo-GreenOS}
Let $U(z)$ be a 
boundary layer profile which satisfies \eqref{def-Ubl}. 
For each $\alpha,\lambda$, let by $G_{\alpha,\lambda}(x,z)$ be
the Green kernel of the Orr-Sommerfeld equation, with source term in $x$, and let 
\begin{equation}\label{def-mMf}
\mu_s = | \alpha| , \qquad   \mu_f(z) =  \nu^{-1/2} \sqrt{\lambda + \nu \alpha^2 + i \alpha U(z)}, 
\end{equation}
where we take the square root with positive real part. Let $0 < \theta_0 < 1$ and $\zeta < 1/2$. 
Let $\sigma_0 > 0$ be arbitrarily small.
Then, there exists $C_0 > 0$ so that 
\begin{equation}\label{est-GrOS}
 |G_{\alpha,\lambda}(x,z)| 
  \le \frac{C_0}{  \mu_s  d(\alpha,\lambda) }e^{-\theta_0 \mu_s |x-z|} 
  +    \frac{C_0}{ |\mu_f(x) |  d(\alpha,\lambda)}  e^{- \theta_0 | \int_x^z \Re \mu_f \; dy|} 
\eeq
uniformly for all $x,z\ge 0$ and $0 < \nu \le 1$, and uniformly in $(\alpha,\lambda)\in \RR\setminus\{0\}\times \CC$ so that $| \alpha | \le\nu^{-\zeta}$, \eqref{noncritical} holds, and
$$
| E(\alpha,\lambda)| > \sigma_0 .
$$
In addition, there hold the following derivative bounds
\begin{equation}\label{est-GrOS-d}
 | \partial_x^k \partial_z^\ell G_{\alpha,\lambda}(x,z)|  
 \le \frac{C_0\mu_s^{k+\ell}}{   \mu_s  d(\alpha,\lambda)}  e^{-\theta_0 \mu_s |x-z|} 
 +    \frac{C_0 | \mu_f(z) |^{k+\ell}}{| \mu_f(x)| d(\alpha,\lambda) }  e^{- \theta_0 | \int_x^z \Re \mu_f \; dy|}  
\eeq
for all $x,z\ge 0$ and $k, \ell \ge 0$, in which $M_f = \sup_z \Re  \mu_f(z) $. 
In particular, we have 
\begin{equation}\label{est-GrOS-delta}
{ | \Delta_\alpha G_{\alpha,\lambda}(x,z)|  
 \le \frac{C_0}{   d(\alpha,\lambda)^2}  e^{-\theta_0 \mu_s |x-z|} }
 +    \frac{C_0 }{ \nu | \mu_f(x)| }   e^{- \theta_0 | \int_x^z \Re \mu_f \; dy|}  
\eeq
where we "gain" a factor $\mu_s$ in the first term on the right hand side.
\end{theorem}

%


\section{Semigroup bounds}\label{sec-semigroup}


In this section, we shall bound the semigroup $e^{L_\alpha t}$ for $| \alpha | \le\nu^{-\zeta}$ with $\zeta<1/2$. 
In view of (\ref{est-GrOS-delta}), we decompose $e^{L_\alpha t}$ as follows: 
\begin{equation}\label{de-eLta}
e^{L_\alpha t}  = S_{\alpha,1} + S_{\alpha,2} 
\end{equation} 
with 
\begin{equation}\label{def-S1212}
\begin{aligned}
S_{\alpha,1}  \omega_\alpha (z): &=\frac{1}{2\pi i}  \int_{\Gamma_{\alpha}} \int_0^\infty e^{\lambda t}  
 \mathcal{S}_1(x,z) \omega_\alpha (x) \; dxd \lambda,
\\S_{\alpha,2}  \omega_\alpha (z): &= \frac{1}{2\pi i}  \int_{\Gamma_{\alpha}} \int_0^\infty e^{\lambda t}   
\mathcal{S}_2(x,z) \omega_\alpha (x) \; dxd \lambda,
\end{aligned}
\end{equation}
where the kernels $ \mathcal{S}_j(x,z)$ are meromorphic in $\lambda$ and satisfy 
\begin{equation}\label{est-decompGrOS}
\begin{aligned}
|\mathcal{S}_1(x,z) |
&\le  \frac{C_0}{   d(\alpha,\lambda)^2 } e^{-\theta_0 \mu_s |x-z|}, \\
|\mathcal{S}_2(x,z)| &\le \frac{C_0}{\nu | \mu_f(x)| }   
e^{- \theta_0 | \int_x^z \Re \mu_f \; dy|} .
\end{aligned}\eeq


\subsection{Bounds on $S_{\alpha,1}$.}


In this section, we prove the following. 

\begin{proposition}\label{prop-S1a} 
Let $\beta \in (0,\frac12]$. 
For any positive $\tau$, there is a constant $C_\tau$ so that 
\begin{equation}\label{est-S1a}
\| S_{\alpha,1}\omega_\alpha \|_{\beta,\gamma}
\le C_\tau e^{(\Re \lambda_0 +\tau)t} \| \omega_\alpha \|_{\beta,\gamma},
 \end{equation}
uniformly in $t\ge 0$, small $\nu >0$, and $|\alpha|\le \nu^{-\zeta}$ with $\zeta<1/2$. 
\end{proposition}
\begin{proof} 
We restrict ourselves to $\alpha > 0$.
Since the Green kernel $\mathcal{S}_1(x,z) $ is meromorphic in $\lambda$, we can apply the Cauchy's theory and take the contour $\Gamma_\alpha$ of integration to consist of $\lambda$ so that 
\begin{equation}\label{def-tau0}
\Re \lambda = \Re \lambda_0 +\tau
\end{equation}
for arbitrary small, but fixed, constant $\tau>0$. Since $\Gamma_\alpha$ remains in the resolvent set of $L_\alpha$, 
the (inviscid) Evans function $E(\alpha,\lambda)$ never vanishes. In addition, recalling the assumption \eqref{noncritical} and writing $\lambda = \Re \lambda + i \Im \lambda$, we have
$$
d(\alpha,\lambda) =\inf_{z\in \RR_+}| \lambda + i \alpha U(z)| \ge \theta_0 (1 + \inf_z |\Im \lambda - \alpha U(z)|). $$ 
We thus obtain from \eqref{est-decompGrOS} that 
\begin{equation}\label{bd-S1X1}
|\mathcal{S}_1(x,z) |\le C_\tau (1 +  \inf_z |\Im \lambda - \alpha U(z)|)^{-2} e^{-\mu_s |x-z|},
\end{equation}
for all $\lambda\in \Gamma_\alpha$.

Let us now estimate the convolution $S_{\alpha,1}  \omega_\alpha (z)$ in the boundary layer norm 
$\| \cdot \|_{\beta,\gamma}$. First we recall from the definition that 
$$
| \omega_\alpha(x) | \le \|\omega_\alpha\|_{\beta,\gamma} e^{-\beta x} (1+ \delta^{-1}\phi_{P}(\delta^{-1}x)).
$$ 
Hence, recalling \eqref{def-S1212} and using \eqref{bd-S1X1}, we have 
$$\begin{aligned}
|S_{\alpha,1}  \omega_\alpha (z)| 
&\le C_\tau\| \omega\|_{\beta,\gamma} \int_{\RR} \int_0^\infty  
(1 +  \inf_z |\Im \lambda - \alpha U(z)|)^{-2}  e^{(\Re \lambda_0 + \tau)t} 
\\&\quad \times e^{-\mu_s |x-z|}e^{-\beta |x|} (1+ \delta^{-1}\phi_{P}(\delta^{-1}x))\; dx \, \, d \Im \lambda 
.\end{aligned}$$
The integral in $\Im\lambda$ is  bounded, yielding 
$$\begin{aligned}
|S_{\alpha,1}  \omega_\alpha (z)| 
&\le C_\tau\| \omega\|_{\beta,\gamma} e^{(\Re \lambda_0 + \tau)t}  \int_0^\infty  
 e^{-\mu_s |x-z|}e^{-\beta |x|} (1+ \delta^{-1}\phi_{P}(\delta^{-1}x))\; dx
.\end{aligned}$$
Recall that $\mu_s = |\alpha|$. Using the triangle inequality $|z|\le |x|+ |x-z|$ 
and the fact that $\beta\le1/2 < | \alpha |$, we obtain 
$$ 
e^{-\mu_s |x-z|} 
e^{-\beta |x|} \le e^{-\beta z} e^{-\frac12\alpha |x-z|}.
$$
Hence, we get 
$$\begin{aligned}
|S_{\alpha,1}  \omega_\alpha (z)| 
&\le C_\tau\| \omega\|_{\beta,\gamma}e^{(\Re \lambda_0 + \tau)t} 
 e^{-\beta z} \int_0^\infty  e^{-\frac12 \alpha |x-z|}(1+ \delta^{-1}\phi_{P}(\delta^{-1}x))\; dx
 \\
 &\le C_\tau\| \omega\|_{\beta,\gamma}e^{(\Re \lambda_0 + \tau)t} 
 e^{-\beta z} \Big( \alpha^{-1} + \delta^{-1}\int_0^\infty \phi_{P}(\delta^{-1}x)\; dx\Big).
\\
&\le C_\tau\| \omega\|_{\beta,\gamma}e^{(\Re \lambda_0 + \tau)t}  e^{-\beta z} ,
\end{aligned}$$
completing the proof of the Proposition. 
\end{proof}


\subsection{Bounds on $\mathcal{S}_{\alpha,2} $.}


In this section, we give bounds on the semigroup $\mathcal{S}_{\alpha,2}$, defined as in \eqref{def-S1212}. Precisely, we have 

\begin{proposition}\label{prop-S2a} Let $\beta \in (0,\frac12]$. 
For any positive $\tau$, there is a constant $C_\tau$ so that 
\begin{equation}\label{est-S2a}
\| S_{\alpha,2}\omega_\alpha \|_{\beta,\gamma}
\le C_\tau e^{(\Re \lambda_0 +\tau)t} \| \omega_\alpha \|_{\beta,\gamma},
 \end{equation}
uniformly in $t\ge 0$, small $\nu >0$, and $|\alpha|\le \nu^{-\zeta}$ with $\zeta<1/2$.
\end{proposition}
To prove this proposition we will use the following Lemma

\begin{lemma}\label{lem-tGreen} 
Let $\mathcal{S}_{2}(x,z)$ be the Green kernel defined as in \eqref{def-S1212}. Introduce the temporal Green function 
\begin{equation}\label{def-tGreen2}G_2(t,x,z) :=  \frac{1}{2\pi i}  \int_{\Gamma_{\alpha}} e^{\lambda t}   
\mathcal{S}_2(x,z) \; d \lambda  .\end{equation}
Then, for any positive $\tau$, there are constants $C_\tau, \theta_\tau$ so that there holds 
\begin{equation}\label{ptw-G22}
\begin{aligned}
& |G_2(t,x,z) |
\\&\le C_\tau (\nu t)^{-1/2}e^{\tau t} e^{-\frac{|x-z|^2}{16 \nu t}} 
+   C_\tau \sum_{\Re \lambda_\alpha\ge \tau}e^{(\Re \lambda_\alpha + \tau)t}\nu^{-1/2} e^{-\theta_{\tau}\nu^{-1/2}|x-z|} 
 \end{aligned}\end{equation}
uniformly in $t\ge 0$, small $\nu >0$, and $\alpha \in \ZZ^*$, in which 
$$
\theta_{\tau} = \frac12 \sqrt{\Re\lambda_\alpha + \tau + \alpha^2 \nu},
$$
and the summation is taken over finitely many unstable eigenvalues $\lambda_\alpha$ of $L_\alpha$ 
such that $\Re \lambda_\alpha \ge \tau$. 
\end{lemma}

\begin{proof} 
We move the contour of integration $\Gamma_\alpha$ in \eqref{def-tGreen2} 
from its initial position to the particular one defined below at (\ref{def-aaa}).
As $\Gamma_\alpha$ moves to the left in the complex plane, it may meet unstable eigenvalues of $L_\alpha$

We first bound the contribution of these unstable eigenvalues. 
To proceed, let $\tau$ be an arbitrary positive number, and let $\lambda_\alpha = -i\alpha c$ be a zero of $E(\alpha,\lambda)$
 such that $\Re \lambda_\alpha \ge \tau$. 
 As $E(\alpha,\lambda)$ is analytic in $c$, its zeros $\lambda_\alpha$ are isolated. 
 Taking $\tau$ smaller if needed, we can assume that there is no other unstable eigenvalue
 in the ball $B(\lambda_\alpha, \frac12\tau)  = \{|\lambda - \lambda_\alpha |\le \frac12\tau\}$. 
 In particular, we have $|E(\alpha,\lambda)| \ge C_\tau$
for $\lambda \in \partial B(\lambda_\alpha, \frac12\tau) $. 
In addition, since $\Re \lambda \ge \frac14(\Re \lambda_\alpha + \tau)$ on $ \partial B(\lambda_\alpha, \frac12\tau) $, we have 
$$ 
\Re \mu_f = \nu^{-1/2} \Re \sqrt{\lambda + \alpha^2\nu+ i\alpha U } \ge \nu^{-1/2}\theta_\tau,
$$
with $\theta_\tau = \frac12\sqrt{\Re\lambda_\alpha + \tau + \alpha^2 \nu}.$  
Thus, on $\partial B(\lambda_\alpha, \frac12\tau)$, there holds
$$
|\mathcal{S}_2(x,z)| \le \frac{C_0 }{ \nu | \mu_f(x)| }   e^{- \theta_0 | \int_x^z \Re \mu_f \; dy|}  
\le C_\tau  \nu^{-1/2} e^{- \theta_\tau  \nu^{-1/2} |x-z| } ,
$$
upon recalling \eqref{noncritical}. This yields
\begin{equation}\label{bd-GreenRes}
\Big| \int_{\partial B(\lambda_\alpha, \frac12\tau)} e^{\lambda t}   
\mathcal{S}_2(x,z) \; d \lambda \Big|\le C_\tau \nu^{-1/2}  e^{(\Re \lambda_\alpha + \tau)t}
   e^{- \theta_\tau \nu^{-1/2} |x-z| }  , 
\end{equation}  
which contributes to the last term in the Green function bound \eqref{ptw-G22}.  

We are now ready to choose a suitable contour of integration $\Gamma_\alpha$. 
Let us consider the case when $x<z$, the other case is similar.
Recall that 
$$
\mu_f(z) 
= \nu^{-1/2}\sqrt{\lambda + i\alpha U+ \alpha^2  \nu } .
$$ 
By construction, $\mathcal{S}_{2}(x,z)$ is holomorphic in $\lambda$, except on the complex half strip
$$ 
\mathcal{H}_\alpha(x,z): = \Big\{ \lambda = -k -\alpha^2  \nu + i \alpha U(y), \qquad k\in \RR_+, \quad y \in [x,z]\Big\}.
$$
In our choice of contour of integration below, we shall avoid to enter this complex strip. 
We set 
$$
\begin{aligned}
\Gamma_{\alpha,1} &:= \Big\{ \lambda =\gamma_1 -\alpha^2 \nu - i \alpha c,
 \qquad \min_{y\in [x,z]} U(y) \le c \le \max_{y\in [x,z]} U(y) \Big\} 
\\
\Gamma_{\alpha,2} &:=  \Big\{ \lambda =\gamma_1 -\alpha^2 \nu - k^2  \nu  - i \alpha \min_{[x,z]} U + 2  \nu i ak,
 \qquad k\ge 0 \Big\}
\\
\Gamma_{\alpha,3} &:= \Big\{ \lambda = \gamma_1 -\alpha^2 \nu - k^2  \nu  - i \alpha \max_{[x,z]} U + 2  \nu i ak, 
\qquad k\le 0 \Big\} 
\end{aligned}$$
where
\begin{equation}\label{def-aaa} 
\gamma_1 = \tau + a^2\nu + \alpha^2\nu, \qquad a = \frac{|x-z| + \sqrt{\nu t}}{4 \nu t} .
\end{equation}
The choice of the parabolic contours $\Gamma_{\alpha,2}$ and $\Gamma_{\alpha,3}$ 
is necessary to avoid singularities in small time \cite{ZH}. 
Note that they never meet the complex strip $\mathcal{H}_\alpha (x,z)$.
In addition, they may leave unstable eigenvalues to the right, in which case the contribution from unstable eigenvalues \eqref{bd-GreenRes} 
is added into the bounds on the Green function.

%
%

%
%
%
%
%
%
%


\subsubsection*{Bounds on $\Gamma_{\alpha,1}$.}


We start our computation with the integral on $\Gamma_{\alpha,1}$. 
We first note that for $\lambda \in \Gamma_{\alpha,1}$, there holds
$$  
\Re  \mu_f = \nu^{-1/2} \Re   \sqrt{ \gamma_1 + i\alpha (U-c)}  \ge \nu^{-1/2} \sqrt{\gamma_1} .
$$
Hence, we have 
$$
\begin{aligned}
|\mathcal{S}_2(x,z)| 
&\le \frac{C_0 }{ \nu | \mu_f(x)|  }   
e^{- \theta_0 | \int_x^z \Re \mu_f \; dy|} 
 \le C  \nu^{-1/2} \gamma_1^{-1/2}  e^{-\nu^{-1/2} \sqrt{\gamma_1} |x-z|} .
\end{aligned}$$
Using $\gamma_1 \ge a^2\nu$ and $\gamma_1 \ge \alpha^2 \nu$, we note that 
$$
\begin{aligned}
 e^{-\frac12\nu^{-1/2} \sqrt{\gamma_1} |x-z|} &\le e^{-\frac a2 |x-z|} = e^{-\frac{|x-z|^2}{8\nu t} - \frac{|x-z|}{8\sqrt{\nu t}}}
 \\
 e^{-\frac12\nu^{-1/2} \sqrt{\gamma_1} |x-z|} &\le e^{-\frac12 \alpha|x-z|}  .
  \end{aligned}$$
On the other hand, recalling \eqref{def-aaa}, we compute 
$$\begin{aligned}
| e^{\lambda t} |  = e^{\gamma_1 t} e^{-\alpha^2 \nu t}  = e^{\tau t} e^{a^2 \nu t} = e^{\tau t} e^{\frac{|x-z|^2}{16\nu t} + \frac{|x-z|}{8\sqrt{\nu t}} + \frac1{16}}.
\end{aligned}
  $$
 Thus, putting the above estimates together, we obtain 
\begin{equation}
\begin{aligned}
|  e^{\lambda t} \mathcal{S}_{2}(x,z) |  &\le C  \nu^{-1/2} \gamma_1^{-1/2} e^{\tau t}  e^{-\frac{|x-z|^2}{16\nu t} - \frac12 \alpha|x-z|}
\end{aligned}\end{equation}
for any $\lambda \in \Gamma_{\alpha,1}$. 
Hence, we estimate 
$$\begin{aligned}
 \Big| \int_{\Gamma_{\alpha,1}} e^{\lambda t}   \mathcal{S}_{2}(x,z) \; d\lambda \Big|
&\le  C \alpha \nu^{-1/2} \gamma_1^{-1/2}e^{\tau t} e^{-\frac{|x-z|^2}{16\nu t} - \frac12 \alpha|x-z|}
\int_{\min_{[x,z]} U}^{\max_{[x,z]}U} dc 
 \\&\le C \alpha \nu^{-1/2} \gamma_1^{-1/2}e^{\tau t}e^{-\frac{|x-z|^2}{16\nu t} - \frac12 \alpha|x-z|} |x-z| \| U'\|_{L^\infty}
   .      \end{aligned}
 $$ 
 Using the inequality $X e^{-X} \le C $ for $X\ge 0$, we have 
 $$ 
 e^{-\frac12 \alpha  |x-z|} \alpha |x-z| \le C.
 $$
We thus obtain 
\begin{equation}\label{bd-Ga1}\begin{aligned}
 \Big| \int_{\Gamma_{\alpha,1}} e^{\lambda t}   \mathcal{S}_{2}(x,z) \; d\lambda\Big|
&\le   C \nu^{-1/2} \gamma_1^{-1/2} e^{\tau t}e^{-\frac{|x-z|^2}{16\nu t}}
.\end{aligned}
  \end{equation}
This yields the claimed estimate on $\Gamma_{\alpha,1}$, upon noting that $\sqrt t \le C_\tau e^{\tau t}$ for any $t\ge 0$.

  
\subsubsection*{Bounds on $\Gamma_{\alpha,2}$ and $\Gamma_{\alpha,3}$.}


By symmetry, it suffices to give bounds on $\Gamma_{\alpha,2}$. For $\lambda \in \Gamma_{\alpha,2}$ and $y \in [x,z]$, we compute 
$$
\begin{aligned}
 \mu_f(y)  &= \nu^{-1/2}  \sqrt{ \gamma_1  - k^2 \nu+ i \alpha (U -  \min_{[x,z]} U) + 2 i  \nu  ak }
 \\&  =\nu^{-1/2} \sqrt{ \tau + \alpha^2 \nu + i \alpha (U -  \min_{[x,z]} U) + (a+ik)^2 \nu } .
 \end{aligned}
 $$
 Recalling that $k, \alpha, \tau\ge 0$, the above yields
  \beq \label{minia}
  \Re \mu_f(y) \ge \Re \sqrt{(a+ik)^2} = a
  \eeq
for $y\in [x,z]$.  
So, using $a = (|x-z| + \sqrt{\nu t}) / 4\nu t$, we get 
$$ 
\begin{aligned}
a^2 \nu t - a|x-z| 
&= \frac{|x-z|^2 + 2 \sqrt{\nu t}|x-z| + \nu t }{16 \nu t} - \frac{|x-z|^2 + \sqrt{\nu t}|x-z|}{4\nu t}
\\
&= \frac1{16}- \frac{3|x-z|^2 + 2\sqrt{\nu t}|x-z|}{16\nu t}
\end{aligned} $$
and hence 
\begin{equation}\label{bd-G2exp}\begin{aligned} 
e^{  \Re \lambda t} e^{- \int_x^z  \Re \mu_f(y) \; dy} 
&\le e^{\tau t} e^{a^2  \nu t- \nu k^2 t } e^{-a|x-z|} 
\le C e^{\tau t}e^{- \nu k^2 t } e^{-\frac{3|x-z|^2}{16 \nu t}} .
\end{aligned}\end{equation}
Moreover 
$$
\begin{aligned}
 \mu_f(x)  =\nu^{-1/2} \sqrt{ \tau + \alpha^2 \nu 
 + a^2 \nu - k^2 \nu  + i \alpha (U -  \min_{[x,z]} U) + 2 i\nu ak} .
 \end{aligned}$$
This implies that 
$$
|\mu_f(x)|\ge \nu^{-1/2}\sqrt{\alpha (U-\min_{[x,z]}U) + 2 \nu ak} \ge \sqrt{2 ak},
$$ 
recalling $k\ge 0$. We also have $|\mu_f(x)|\ge \Re \mu_f(x) \ge a$ (see \ref{minia}). Hence, 
$$
 |\mu_f(x)| \ge \frac12 (a + \sqrt {ak}).
$$
Hence, recalling $i\alpha \varepsilon  =  \nu$ and noting $ d\lambda = 2\nu i (a+ ik) dk $ on $\Gamma_{\alpha,2}$,
we can estimate 
$$
\begin{aligned}
 \Big| \int_{\Gamma_{\alpha,2}} e^{\lambda t}   \mathcal{S}_{2}(x,z) \; d\lambda\Big|
&\le  C_\tau e^{\tau t}e^{-\frac{|x-z|^2}{4 \nu t}}\int_{\RR_+} e^{- \nu k^2 t} { \frac{|a+ik|dk}{ a + \sqrt {ak}}} 
\\&\le C_\tau e^{\tau t}e^{-\frac{|x-z|^2}{4 \nu t}}\int_{\RR_+} e^{- \nu k^2 t}{( 1 + a^{-1/2}\sqrt k )} dk 
\\&\le  C_\tau (\nu t)^{-1/2}e^{\tau t}e^{-\frac{|x-z|^2}{4 \nu t}} {(1 + a^{-1/2} (\nu t)^{-1/4} )}
\end{aligned}
$$
up to the contribution from unstable eigenvalues \eqref{bd-GreenRes}. Note that $a\ge (\nu t)^{-1/2}$ and hence $a^{-1/2}(\nu t)^{-1/4}\le 1$.  
The Lemma follows.
\end{proof}

\begin{lemma}\label{lem-convolutionG2} 
Let 
$$
H(t,x,z): =(\nu t)^{-1/2} e^{-\frac{|x-z|^2}{M\nu t}} 
$$ for some positive $M$. 
For any positive $\beta$, there is a constant $C_0$ so that 
$$
\Big \| \int_0^\infty H(t,x,\cdot) \omega_\alpha(x)\; dx \Big\|_{ \beta, \gamma} 
\le C_0 e^{M\beta^2 \nu t } \| \omega_\alpha\|_{ \beta, \gamma}.
$$
\end{lemma}


\begin{proof} Let $\omega_\alpha (z)$ be a boundary layer function that satisfies
\begin{equation}\label{assmp-wbl}
|\omega_\alpha (z) | \le
 \| \omega_\alpha\|_{ \beta, \gamma} \Bigl( 1 +  \delta^{-1} \phi_{P} (\delta^{-1} z)  \Bigr) e^{-\beta z} .
 \end{equation}
We first show that the convolution has the right exponential decay at infinity. Indeed, in the case when $|x-z|\ge M\beta\nu t$, 
using $|x| \ge |z| - |x - z|$,
we have
$$
e^{-\frac{|x-z|^2}{M\nu t}} e^{-\beta  |x|} \le e^{-\beta  |z|} e^{-|x-z| \Big( \frac{|x-z|}{M\nu t} - \beta \Big)} \le e^{-\beta  |z|}.
$$
Whereas, for $|x-z| \le M \beta \nu t$, we note that 
$$ 
e^{-\frac{|x-z|^2}{M\nu t}} e^{-\beta  |x|} \le 
e^{- M \beta^2 \nu t} e^{-\beta  |x|} \le e^{-\beta |x-z|} e^{-\beta  |x|} \le e^{-\beta  |z|}.
$$
Combining, we have 
\begin{equation}\label{exp-beta13}
e^{-\frac{|x-z|^2}{M\nu t}} e^{-\beta x} \le 
e^{-\beta |z|}, \qquad \forall x,z\in \RR
\end{equation}
which yields the spatial decay $e^{-\beta z}$ in the norm $\|\cdot \|_{\beta,\gamma}$.

It remains to study the integral 
\begin{equation}\label{conv-heatbl1-stable}
\begin{aligned}
\int_0^\infty (\nu t)^{-1/2}  e^{-\frac{|x-z|^2}{M\nu t}} 
  \Bigl( 1 + \delta^{-1} \phi_{P} (\delta^{-1} x)  \Bigr) \; dx.
\end{aligned}\end{equation}
The integral without the boundary layer behavior is clearly bounded. Next, using the fact that $\phi_{P}(\delta^{-1}x)$ is decreasing in $x$, we have 
$$
\begin{aligned}
 \int_{z/2}^\infty & (\nu t)^{-1/2}   e^{-\frac{|x-z|^2}{M\nu t}} 
  \delta^{-1} \phi_{P} (\delta^{-1} x) \; dx 
 \\ &\le C_0   \delta^{-1} \phi_{P} (\delta^{-1} z)  \int_{z/2}^\infty  (\nu t)^{-1/2}   e^{-\frac{|x-z|^2}{M\nu t}}   \; dx
  \\&\le C_0   \delta^{-1} \phi_{P} (\delta^{-1} z) .  \end{aligned}$$
Whereas on $x\in (0,z/2)$, we have $|x-z|\ge \frac z2 $ and $\phi_{P} \le 1$. We have   
\begin{equation}\label{large-time-del}
\begin{aligned}
 \int_0^{z/2} &  (\nu t)^{-1/2}   e^{-\frac{|x-z|^2}{M\nu t}}  
  \delta^{-1} \phi_{P} (\delta^{-1} x) \; dx 
\\
&\le C_0 e^{-\frac{|z|^2}{8M\nu t}}  \delta^{-1}  \int_0^{z/2}   (\nu t)^{-1/2}   e^{-\frac{|x-z|^2}{2M \nu t}}  \; dx 
\\&\le C_0 e^{-\frac{|z|^2}{8M\nu t}}  \delta^{-1} .
  \end{aligned}\end{equation}

It remains to prove that 
\begin{equation}\label{X-bd}  
e^{-\frac{|z|^2}{8M\nu t}}  \delta^{-1} \le C_0 \delta^{-1} e^{8M\nu t}  e^{-z/\sqrt \nu}
\end{equation}
for some constant $C_0$. Indeed, the inequality is clear, for $|z| \ge 8M\nu^{3/2} t$, since 
$ e^{-\frac{|z|^2}{8M\nu t}} \le  e^{-z/\sqrt\nu }  $. Next, for $|z| \le 8M\nu^{3/2} t$, we have 
$$ 
1\le e^{8M \nu t} e^{-z/\sqrt \nu}.
$$ 
This proves \eqref{X-bd}, and so \eqref{large-time-del} is again bounded by 
$C_0  e^{8M\nu t}  \delta^{-1} \phi_{P} (\delta^{-1} z)$, upon recalling that the boundary layer thickness is of order $\delta = \gamma \sqrt \nu$. 

\end{proof}

\begin{lemma}\label{lem-convolutionG21} 
Let 
$$
R(t,x,z): = \nu^{-1/2} e^{-\theta_{\tau}\nu^{-1/2}|x-z|} 
$$ 
for some positive $\theta_\tau$. Then, for any positive $\beta$, there is a constant $C_\tau$, depending on $\theta_\tau$, so that 
$$
\Big \| \int_0^\infty R(t,x,\cdot) \omega_\alpha(x)\; dx \Big\|_{ \beta, \gamma} \le C_\tau \| \omega_\alpha\|_{ \beta, \gamma}.
$$
\end{lemma}

\begin{proof} We need to bound the integral 
$$
\begin{aligned}
\int_0^\infty \nu^{-1/2} e^{-\theta_{\tau}\nu^{-1/2}|x-z|} \Bigl( 1 + \delta^{-1} \phi_{P} (\delta^{-1} x)  \Bigr) e^{-\beta |x|}\; dx.
\end{aligned}
$$
First, taking $\nu$ smaller, if needed, we can assume that $\frac12\theta_\tau \nu^{-1/2}\ge \beta$, and thus by the triangle inequality $|z|\le |x| + |x-z|$, we have 
 $$ e^{-\frac12\theta_\tau \nu^{-1/2}|x-z|} e^{-\beta |x|} \le e^{-\beta |z|}.$$
Next, similarly as done in the previous lemma, we have 
$$
\begin{aligned}
 \int_{z/2}^\infty & \nu^{-1/2} e^{-\theta_{\tau}\nu^{-1/2}|x-z|} 
  \delta^{-1} \phi_{P} (\delta^{-1} x) \; dx 
 \\ &\le C_0   \delta^{-1} \phi_{P} (\delta^{-1} z)  \int_{z/2}^\infty \nu^{-1/2} e^{-\theta_{\tau}\nu^{-1/2}|x-z|} \; dx
  \\&\le C_\tau \delta^{-1} \phi_{P} (\delta^{-1} z) ,\end{aligned}$$
and 
$$
\begin{aligned}
 \int_0^{z/2} & \nu^{-1/2} e^{-\theta_{\tau}\nu^{-1/2}|x-z|} 
  \delta^{-1} \phi_{P} (\delta^{-1} x) \; dx 
  \\& \le  \nu^{-1/2} e^{-\frac12\theta_{\tau}\nu^{-1/2}|z|} 
   \int_0^{z/2}\delta^{-1} \phi_{P} (\delta^{-1} x) \; dx 
\\& \le C_0  \nu^{-1/2} e^{-\frac12\theta_{\tau}\nu^{-1/2}|z|} ,\end{aligned} 
  $$
which is again bounded by $C_\tau \delta^{-1} \phi_{P} (\delta^{-1} z) $, upon recalling that $\delta = \gamma \sqrt \nu$ and $\phi_P(Z) = (1+Z^P)^{-1}$. 
  \end{proof}

\begin{proof}[Proof of Proposition \ref{prop-S2a}] 
In view of \eqref{def-S1212} and \eqref{def-tGreen2}, we have 
$$ 
S_{\alpha,2} \omega_\alpha (z) = \int_0^\infty G_2(t,x,z) \omega_\alpha(x)\; dx.
$$
For each fixed positive $\tau$, we first show that the set of unstable eigenvalues $\lambda_\alpha$ of $L_\alpha$, 
for all $\alpha\in \ZZ^*$, such that $\Re \lambda_\alpha \ge \tau$ is finite. 
We recall that, in the inviscid limit, such eigenvalues are perturbations of eigenvalues of the limit problem $\nu = 0$, namely
of Rayleigh equation.
It is therefore sufficient to study the Rayleigh problem
$$
 \Delta_\alpha \phi - \frac{U''}{U-c}\phi =0, \qquad c = -\frac{\lambda_\alpha}{i\alpha},
 $$
with the boundary condition $\phi_{\vert_{z=0}} =0$. For each $\alpha \in \ZZ^*$, 
it is clear that there are only finitely many unstable eigenvalues, since the Rayleigh operator 
is a compact perturbation of the Laplacian $\Delta_\alpha$. In addition, multiplying the Rayleigh equation by $\overline{\phi}$ 
and integrating by parts, we get   
$$ 
\int_0^\infty (|\partial_z\phi|^2 + \alpha^2 |\phi|^2) \; dz \le \frac{1}{|\Im c|} \int_0^\infty |U''| |\phi|^2\; dz.
$$ 
In particular, if $\alpha^2 |\Im c| \ge \|U''\|_{L^\infty}$, there is no nontrivial solution to the Rayleigh problem. 
This implies that there is no unstable eigenvalue $\lambda_\alpha$, whenever $\alpha \Re \lambda_\alpha \ge \|U''\|_{L^\infty}$. 
In particular, there are no unstable eigenvalues $\Re \lambda_\alpha \ge \tau$, whenever $\alpha\ge \tau^{-1}\|U''\|_{L^\infty}.$ 

This proves that there are finitely many unstable eigenvalues of $L_\alpha$ so that $\Re \lambda_\alpha \ge \tau$ for all $\alpha \in \ZZ^*$.
 In particular, the summation in \eqref{ptw-G22} from Lemma \ref{lem-tGreen} is finite, independent of $\alpha\in \ZZ^*$, 
 yielding 
$$ 
|G_2(t,x,z) |\le C_\tau (\nu t)^{-1/2}e^{\tau t} e^{-\frac{|x-z|^2}{8 \nu t}}  
+  C_\tau e^{(\Re \lambda_0 + \tau)t}\nu^{-1/2} e^{-\theta_{\tau}\nu^{-1/2}|x-z|} 
$$
with $\theta_{\tau} = \frac12 \sqrt{\Re \lambda_0 + \tau + \alpha^2 \nu}$, 
where $\lambda_0$ denotes the maximal unstable eigenvalue.
Finally, applying Lemmas \ref{lem-convolutionG2} and \ref{lem-convolutionG21}, 
respectively, to the above pointwise bounds, we complete the proof of the Proposition.
\end{proof}


\section{Elliptic estimates}\label{sec-elliptic}


For the sake of completeness we now detail how the weighted estimates on vorticity may be translated into 
weighted estimates on the velocity field.


\subsection{Inverse of Laplace operator in one space dimension}\label{sec-Lap}


Let us now solve the classical Laplace equation
\beq \label{Lap1}
\Delta_\alpha \phi = \partial_z^2 \phi - \alpha^2 \phi = f
\eeq
on the half line $z \ge 0$, with the Dirichlet boundary condition 
\beq \label{Lap2}
\phi(0) = 0 .
\eeq
Note that $\alpha$ is a non zero integer. We assume that $\alpha > 0$.
We start with bounds in the space ${\cal A}^{\beta}$, namely without boundary layers. We will prove
\begin{prop}
If $f \in {\cal A}^{\beta}$, then the solution $\phi$ to \eqref{Lap1}-\eqref{Lap2} belongs to ${\cal A}^{\beta}$ provided  $\beta \le  1/2$. 
In addition, there holds
\beq \label{Lap3}
\alpha^2 \| \phi \|_{\beta} + \alpha  \| \partial_z \phi \|_{\beta} 
+ \| \partial_z^2 \phi \|_{\beta}   \le C \| f \|_{\beta},
\eeq
where the constant $C$ is independent of $\alpha \in \NN^*$.
\end{prop}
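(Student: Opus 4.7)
The plan is to solve \eqref{Lap1}--\eqref{Lap2} explicitly via the Green function for $\Delta_\alpha$ on the half line with Dirichlet data at $z=0$ and decay at infinity, namely
$$
G_\alpha(z,x) = -\frac{1}{2|\alpha|}\Bigl(e^{-|\alpha|\,|z-x|} - e^{-|\alpha|(z+x)}\Bigr),
$$
so that $\phi(z) = \int_0^\infty G_\alpha(z,x) f(x)\,dx$. All three estimates in \eqref{Lap3} will then follow from bounding this convolution in the weight $e^{\beta z}$, using the hypothesis $|f(x)|\le \|f\|_\beta e^{-\beta x}$, together with the fact that $|\alpha|\ge 1 > 2\beta$.

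First I would bound $\phi$ itself. Multiplying through by $e^{\beta z}$ gives
$$
|\phi(z)|\,e^{\beta z} \;\le\; \frac{\|f\|_\beta}{2|\alpha|}\,e^{\beta z}\int_0^\infty \Bigl(e^{-|\alpha|\,|z-x|} + e^{-|\alpha|(z+x)}\Bigr) e^{-\beta x}\,dx.
$$
Splitting the first term into $0\le x\le z$ and $x\ge z$, each of the three resulting integrals is explicit and, thanks to $\beta<|\alpha|$, is bounded by $C/|\alpha|\cdot e^{-\beta z}$. This yields $\|\phi\|_\beta \le C\|f\|_\beta/\alpha^2$, which is the first piece of \eqref{Lap3}.

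Next I would estimate $\partial_z\phi$. Differentiating under the integral,
$$
\partial_z G_\alpha(z,x) = -\tfrac12\Bigl(\operatorname{sgn}(z-x)\,e^{-|\alpha|\,|z-x|} - e^{-|\alpha|(z+x)}\Bigr),
$$
so the prefactor $1/(2|\alpha|)$ is lost but the pointwise bound is otherwise identical to the one used above. Repeating the same split then gives $\|\partial_z\phi\|_\beta \le C\|f\|_\beta/|\alpha|$. Finally, I would read off $\partial_z^2\phi$ directly from the equation: $\partial_z^2\phi = f + \alpha^2\phi$, whence
$$
\|\partial_z^2 \phi\|_\beta \le \|f\|_\beta + \alpha^2 \|\phi\|_\beta \le C\|f\|_\beta.
$$
Combining the three bounds proves \eqref{Lap3} with a constant uniform in $\alpha$.

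There is no real obstacle here; the only point that requires attention is the bookkeeping in the split of the $|z-x|$ integral, where one must keep the weight $e^{\beta z}$ outside and pair it against $e^{-|\alpha|(z-x)}e^{-\beta x}$ on $\{x\le z\}$ (which produces $e^{-(|\alpha|-\beta)z}\cdot e^{(|\alpha|-\beta)x}$ integrable up to $z$) and against $e^{-|\alpha|(x-z)}e^{-\beta x}$ on $\{x\ge z\}$ (which produces $e^{(|\alpha|+\beta)z}\cdot e^{-(|\alpha|+\beta)x}$), each yielding the clean factor $e^{-\beta z}/|\alpha|$. The boundary term $e^{-|\alpha|(z+x)}$ is even easier. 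This is precisely where the restriction $\beta<1/2\le |\alpha|/2$ is used to keep all exponential integrals convergent with a margin independent of $\alpha$.
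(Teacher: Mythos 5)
Your proof is correct and follows the same route as the paper's: write $\phi$ via the explicit Green function for $\partial_z^2-\alpha^2$ on the half line with Dirichlet data, bound the convolution using $|\alpha|\ge 1>2\beta$ to get $\|\phi\|_\beta\lesssim\alpha^{-2}\|f\|_\beta$, repeat after differentiating the kernel to get the $|\alpha|^{-1}$ bound on $\partial_z\phi$, and read off $\partial_z^2\phi$ from the equation. You have simply spelled out the elementary exponential integrals that the paper leaves implicit.
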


\begin{proof} The solution $\phi$ of (\ref{Lap1})-\eqref{Lap2} is explicitly given by
\begin{equation}\label{laplacephi1}
\phi(z) = \int_0^\infty G(x,z) f(x) dx 
\end{equation}
where $G(x,z)$ is the Green function of $\partial_z^2 - \alpha^2$, with the Dirichlet boundary condition. Precisely, we have 
$$
G(x,z) = - {1 \over 2 \alpha } \Bigl( e^{- \alpha | z - x |} - e^{- \alpha | z + x |} \Bigr) .
$$
In particular, $|G(x,z)|\le \alpha^{-1} e^{-\alpha |x-z|}$. Therefore, as $|f(z)|\le \|f\|_\beta e^{-\beta z}$, we have 
$$
| \phi (z) | \le \alpha^{-1} \| f \|_{\beta} \int_0^\infty
e^{- \alpha |   z -   x |} e^{-\beta   x} 
 d  x .
$$
Using the triangle inequality $|z|\le |x|+|x-z|$ and the assumption that $\beta\le 1/2<\alpha$, we have 
$$
| \phi (z) | \le \alpha^{-1} \| f \|_{\beta} e^{-\beta z}\int_0^\infty
e^{-\frac12 \alpha |   z -   x |} 
 d  x \le 2 \alpha^{-2} \| f \|_{\beta} e^{-\beta z},
$$
which yields the claimed bound for $\alpha^2\phi$. 
The estimate for $\partial_z\phi$ follows similarly, upon noting that $|\partial_z G(x,z)|\le e^{-\alpha|x-z|}$. 
Finally, writing $\partial_z^2 \phi = \alpha^2 \phi + f$, we obtain the estimate for $\partial_z^2 \phi$ from that of $\alpha^2 \phi$. 
\end{proof}

Next, we establish similar elliptic estimates when the source term  $f$ has a boundary layer behavior. 

\begin{prop} \label{proplaplace3}
If $f \in {\cal B}^{\beta,\gamma}$, then the solution $\phi$ to \eqref{Lap1}-\eqref{Lap2} belongs to ${\cal A}^{\beta}$ 
provided  $\beta \le  1/2$. In addition, there hold
\beq \label{Lap4}
\begin{aligned} \alpha  \| \phi \|_{\beta} 
+  \| \partial_z \phi \|_{\beta}   &\le C \| f \|_{\beta,\gamma},
\\
\| \partial_z^2 \phi \|_{\beta,\gamma} &\le C \| f \|_{\beta,\gamma} + C \| \alpha f \|_{\beta,\gamma},
\end{aligned}\eeq
where the constant $C$ is independent of $\alpha\in \NN^*$.
\end{prop}

\begin{proof} For a boundary layer function $f \in \cB^{\beta,\gamma}$, using (\ref{laplacephi1}), we have 
$$
\begin{aligned}
| \phi (z) | 
&\le \alpha^{-1} \| f \|_{\beta,\gamma} \int_0^\infty
e^{- \alpha |   z -   x |} e^{-\beta   x} 
\Bigl( 1 + \delta^{-1} \phi_P(\delta^{-1} x) \Bigr) d  x
\\
&\le \alpha^{-1} \| f \|_{\beta,\gamma} e^{-\beta z}\int_0^\infty
e^{- \frac12\alpha |   z -   x |} 
\Bigl( 1 + \delta^{-1} \phi_P(\delta^{-1} x) \Bigr) d  x
\\&\le \alpha^{-1} \| f \|_{\beta,\gamma} e^{-\beta z}\Big( 2\alpha^{-1} + C \Big)
,\end{aligned}$$
upon noting that $\delta^{-1} \phi_P(\delta^{-1} \cdot)$ is bounded in $L^1$. 
This proves the claimed bound for $\alpha \phi$. The estimate for $\partial_z\phi$ follows similarly,
 upon noting that $|\partial_z G(x,z)|\le e^{-\alpha|x-z|}$. 
 The estimate for $\partial_z^2\phi$ follows from $\partial_z^2 \phi = \alpha^2 \phi + f$. 
\end{proof}

Because of the boundary layer behavior of $f$, we cannot get a good control on second order derivatives
 without  an extra control on $f$ (precisely, on $\alpha f$) as is the case in Proposition \ref{proplaplace3}.


\subsection{Stream function and vorticity}


We recall that the stream function and vorticity are linked through the elliptic equation 
\begin{equation}\label{Lap5}
- \Delta \phi = \omega
\end{equation}
with the zero Dirichlet boundary condition on $\phi$ at $z=0$. Let 
\begin{equation}\label{def-vphi}
v = \nabla^\perp \phi
\end{equation}
be the velocity vector field. We obtain the following Proposition

\begin{prop} \label{inverseLaplace} Let $\beta \le 1/2$ and let $\omega$ be a vorticity function so that 
$\omega_\alpha \in \cB^{\beta,\gamma}$, for $\alpha \in \ZZ$. 
Then, for each $\alpha \in \ZZ$, there hold the following elliptic estimates: 
\beq \label{Lap44}
\begin{aligned}
 \|  v_\alpha \|_{\beta} &\le C \| \omega_\alpha \|_{\beta,\gamma},
 \\
 \| \partial_z v_{2,\alpha}\|_\beta+  \| \partial_z v_{1,\alpha} \|_{\beta,\gamma} 
 &\le C\Big( \| \omega_\alpha \|_{\beta,\gamma} + \|\alpha \omega_\alpha \|_{\beta,\gamma}\Big),
 \\
\|  \psi(z)^{-1}v_{2,\alpha} \|_{\beta} 
&\le C\Big( \| \omega_\alpha \|_{\beta,\gamma} + \|\alpha \omega_\alpha \|_{\beta,\gamma}\Big),
  \end{aligned}
\eeq
with $\psi(z) = z / (1+z)$
\end{prop}
\begin{proof}
In Fourier variables, the elliptic equation \eqref{Lap5} reads 
$$
(\partial_z^2 - \alpha^2) \phi_\alpha = \omega_\alpha.
$$ 
The estimates on $v_\alpha$ and $\partial_z v_\alpha$ are derived using  Proposition \ref{proplaplace3}, 
with $v_{\alpha,1} = \partial_z \phi_\alpha$ and $v_{2,\alpha} = -i\alpha \phi_\alpha$. Next, using $\phi_\alpha(0)=0$, we write 
$$
 \psi(z)^{-1} \phi_\alpha(z) = \psi(z)^{-1} \int_0^z \partial_z \phi_\alpha\; dy 
 \le \| \partial_z \phi_\alpha\|_\beta \, \psi(z)^{-1}\int_0^z e^{-\beta y}\; dy 
 $$
which is  bounded by $C\|\partial_z\phi_\alpha\|_\beta$.  
The proposition follows. 
\end{proof}


\bibliographystyle{abbrv}

\def\cprime{$'$} \def\cprime{$'$}

\end{document}